\newtheorem{definition}{Definition}
\newtheorem{example}{Example} 
\newtheorem{assumption}{Assumption}
\newtheorem{theorem}{Theorem}
\newtheorem{lemma}{Lemma}
\newtheorem{proposition}{Proposition}
\title{\LARGE \bf
Fiedler-Based Characterization and Identification of \\Leaders in Semi-Autonomous Networks
}
\author{Evyatar Matmon$^{1}$ and Daniel Zelazo$^{2}$
\thanks{*This work was supported by the Israel Science Foundation grant no.
453/24.}
\thanks{$^{1}$Evyatar Matmon is with Technion Autonomous Systems and Robotics Program, Technion - Israel Institute of Technology, Haifa 32000 {\tt\small evyatarmat1@gmail.com}}%
\thanks{$^{2}$Daniel Zelazo with the Stephen B. Klein Faculty of Aerospace Engineering, Technion - Israel Institute of Technology, Haifa 32000         {\tt\small dzelazo@technion.ac.il}}%
}
\begin{document}

\maketitle
\thispagestyle{empty}
\pagestyle{empty}

\begin{abstract}
This paper addresses the problem of identifying leader nodes in semi-autonomous consensus networks from observed agent dynamics. 
Using the grounded Laplacian formulation, we derive graph-theoretic conditions that ensure the components of the Fiedler vector associated with leader and follower nodes are distinct. 
Building on this foundation, we employ the notion of \emph{relative tempo} from prior work as an observable quantity that relates agents’ steady-state velocities to the Fiedler vector. 
This relationship enables the development of a data-driven algorithm that reconstructs the Fiedler vector—and consequently identifies the leader set—using only steady-state velocity measurements, without requiring knowledge of the network topology. 
The proposed approach is validated through numerical examples, demonstrating how graph-theoretic properties and relative tempo measurements can be combined to reveal hidden leadership structures in consensus networks.

\end{abstract}

\section{INTRODUCTION}

The ability to infer or influence the structure of a networked dynamical system is central to understanding and controlling collective behaviors in multi-agent systems. In many practical scenarios, agents interact through local rules without global supervision, and only partial measurements of their states are available. Identifying how such interactions shape the network dynamics has therefore become an important research direction in network systems theory~\cite{Goncalves2008, Shahrampour2013, Hendrickx2018, GEVERS201710580, WEERTS2018247}.

 The \emph{network identification problem} concerns the task of reconstructing or inferring the underlying interaction topology of a multi-agent system from observed data. In other words, given measurements of the agents’ states or outputs, one seeks to determine which agents exchange information and with what strength. This problem is fundamental in areas such as systems biology \cite{Tegner2003}, neuroscience \cite{Friston2011}, social networks \cite{Easley2010}, and engineering systems \cite{Materassi2012}. A central challenge is that the observed collective behavior does not uniquely determine the network structure without additional assumptions or prior knowledge. Accordingly, approaches to network identification are typically organized along a spectrum: at one end, methods assume complete dynamical models and seek to refine unknown network parameters, while at the other end, data-driven methods attempt to infer both the dynamics and the topology simultaneously \cite{Timme2014,Ren2007}. In the context of multi-agent coordination, identifying the interaction graph is crucial for understanding how local rules give rise to global behaviors, as well as for designing control strategies that are robust to uncertainties or malicious attacks in the communication topology. An illustration of network topology identification is presented in Figure~\ref{fig:NetworkIdIllus}.

\begin{figure}[!htbp] 
  \centering
   \includegraphics[width=0.9\linewidth]{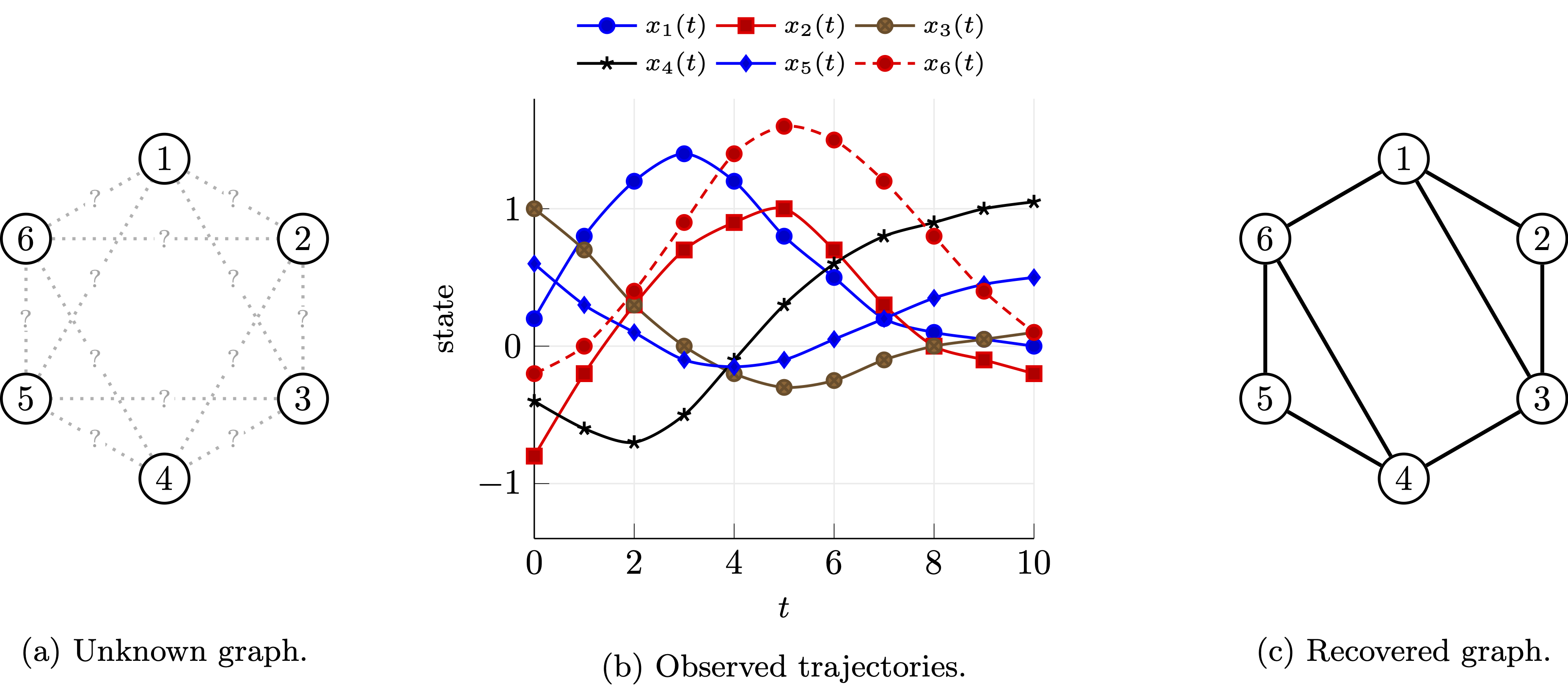}
  \caption{Conceptual overview of the network identification problem: (a) the interaction graph is unknown; (b) trajectories are observed;
  (c) after identification, the edges are recovered.}
  \label{fig:NetworkIdIllus}
\end{figure}

Early work focused on offline reconstruction of the interaction topology or coupling weights from state or input--output data, often assuming access to global trajectories~\cite{Materassi2012}. More recent efforts have sought weaker conditions for identifiability, studying local structural properties, graph-theoretic methods, or algebraic constraints that ensure unique reconstruction up to graph symmetries~\cite{VANWAARDE2018319,8382192}. Other works approach the problem by probing the network with specially designed inputs, \cite{Timme2007, NetworkIdentificationDiffusivelyCoupledNetworksMinimalTimeComplexity}, or by node knock-out \cite{Nabi2010}.

In many real-world networks, not all agents behave identically—some may possess additional autonomy or receive external commands.  These \emph{semi-autonomous consensus networks} consist of a subset of agents, often termed \emph{leaders} or \emph{externally driven nodes}, that are subject to independent control inputs or reference signals, while the remaining \emph{followers} update their states through local interactions within the network~\cite{NI2010209, REN2007474, Li01012019}.  Such leader–follower architectures appear in diverse applications including vehicle platooning, robotic swarms, and opinion dynamics, where overall coordination must be preserved despite partial autonomy.  

From a systems-theoretic viewpoint, these networks introduce an additional layer of structure atop the standard consensus model: the interconnection topology not only determines how information flows, but also which nodes actively influence collective behavior.  Consequently, an important question is how to \emph{identify the leaders}—that is, determine which agents receive external inputs or exert disproportionate influence on the group—using only observed data.  This \emph{leader identification problem} can be regarded as a focused instance of the broader \emph{network identification problem}, where the goal is to infer the underlying interaction structure or influence pattern from measurements of the agents’ trajectories~\cite{Dai2019}.  To the best of our knowledge, this specific problem has received little direct attention in the literature.


The main contribution of this paper is the development of a data-driven algorithm for identifying leader nodes in semi-autonomous consensus networks. 
Building on a new spectral characterization of leader–follower graphs, we first establish analytical conditions under which the components of the Fiedler vector corresponding to leaders and followers are separable. 
We then leverage the concept of \emph{relative tempo} from~\cite{Shao2019relative} as a measurable proxy for the Fiedler vector, enabling estimation of the network’s influence structure directly from steady-state velocity data. 
This connection leads to a simple yet effective leader identification procedure that does not require knowledge of the underlying topology. 
The proposed framework therefore unifies  graph analysis with data-driven inference and provides new insights into how leader influence manifests in the dynamical response of the network.

The rest of the paper is organized as follows.  Section \ref{sec.statement} introduces the general problem setup and the semi-autonomous consensus protocol.  Section~\ref{sec.leader_graphs} establishes sufficient graph-theoretic conditions for leader identifiability and characterizes the limiting form of the Fiedler vector. 
Section~\ref{sec:algorithm} reviews the relative tempo formulation and its connection to the Fiedler eigenvector, culminating in a data-driven leader identification algorithm. 
Section~\ref{sec:NumericalExamples} presents simulation results that validate the theoretical findings, and Section~\ref{sec.conclusion} concludes with remarks and directions for future work.

\paragraph*{Notations}
Let $\mathbb{R}$ and $\mathbb{N}$ denote the set of real numbers and natural numbers, respectively, and $\mathbb{R}^{m\times n}$ the space of real matrices of size $m\times n$. The $n$-dimensional vector of all ones is denoted $\mathds{1}_n$, and $I_n$ is the identity matrix. A graph $\mathcal G=(\mathcal{V},\mathcal{E},\mathcal W)$ is defined by a set of vertices $\mathcal{V} = \{v_1,\dots,v_n\}$, edges $\mathcal E \subseteq \mathcal V \times \mathcal V$, and weights $\mathcal W:\mathcal E \to \mathbb R$. For directed edges, $e_{ij}=(v_i,v_j) \in \mathcal E$ denotes a directed edge from $v_i$ to $v_j$, while for undirected edges $e_{ij} = \{v_i,v_j\}$. The notation $i \sim j$ indicates that an edge exists between the nodes $i$ and $j$, while $\mathcal N(i) = \{j\in\mathcal V \,|\, i \sim j\}$ is the \emph{neighbor set} of node $i$. The \emph{degree} of node $i$, $\deg(i)$, is the cardinality of its neighbor set.  

\section{PROBLEM STATEMENT}\label{sec.statement}

We consider a network of $n$ agents that interact with each other over an undirected and connected graph $\mathcal G$. A subset of the agents, termed the \emph{leaders}, are able to receive additional external commands that may be used to steer the network along a desired trajectory or configuration.  In this direction, let $\mathcal V_\ell \subset \mathcal V$ denote the set of leaders, and $V_f = \mathcal V \setminus \mathcal V_{\ell}$ the set of \emph{followers}.  The dynamics of each agent, therefore, are given by
\begin{align}
    \dot x_i = 
    \begin{cases}
        u_i + u_i^{\mathrm{ex}}, & i \in \mathcal{V}_\ell,\\
        u_i, & i \in \mathcal{V}_f,
    \end{cases}
\end{align}
where $x_i \in \mathbb{R}$ is the state of agent $i$, $u_i \in \mathbb{R}$ is the control, and $u_i^{\mathrm{ex}} \in \mathbb{R}$ is the additional external input available to the leader agents.

We now introduce a basic assumption about the number of leaders and followers. It states that there must be at least one follower node. 
\begin{assumption}\label{assump.leaders}
    The follower set is non-empty ($\mathcal V_f \neq \emptyset$).  
\end{assumption} 

Each agent implements the celebrated \emph{consensus protocol} \cite{Mesbahi2010}, modified such that the leader agents receive the additional input $u_i^{ex}$. That is, each agent has the closed-loop dynamics
\begin{align}\label{semi_aut_orig}
    \dot x_i = 
    \begin{cases}
        \displaystyle \sum_{j \sim i} (x_j - x_i) + (u_i^{\mathrm{ex}} - x_i), & i \in \mathcal{V}_\ell,\\
       \displaystyle \sum_{j \sim i} (x_j - x_i), & i \in \mathcal{V}_f,
    \end{cases}
\end{align}
where the sum is taken over all neighbors $j$ of agent $i$.

In this work we assume that external inputs $u_i^{ex}$ are \emph{constant signals}.  In this way, it is convenient to model them as additional nodes/state in the network, labeled as $y_i \in \mathbb{R}$, with $\dot y_i = 0$ and $y_i(0) = u_i^{ex}$. Defining the augmented state $\bar{x} = [x^T \; y^T]^T$, the closed-loop dynamics can be expressed as 
\begin{equation}\label{semi-aut_aug}
\begin{bmatrix}\dot{x}\\ \dot{y}\end{bmatrix} = -\begin{bmatrix}\bar{L}_{11} & \bar{L}_{12} \\ 0 & 0\end{bmatrix}\begin{bmatrix}x\\y\end{bmatrix},
\end{equation}
with
\begin{equation}\label{L11}
\bar{L}_{11} = L(\mathcal{G}) + \begin{bmatrix} I_{|\mathcal{V}_\ell|} & 0\\ 0 & 0\end{bmatrix}, \quad
\bar{L}_{12} = \begin{bmatrix}-I_{|\mathcal{V}_\ell|}\\0\end{bmatrix}.
\end{equation}
Here, $L(\mathcal G)$ is the graph Laplacian matrix of $\mathcal G$. Note that the state matrix in \eqref{semi-aut_aug} can be thought of as the Laplacian of a \emph{directed graph} $\mathcal{\bar{G}}$, where the external input nodes (the $y_i$'s) are connected by a directed edge to the corresponding leader node.  This is illustrated in Fig. \ref{fig.auggraph}, where the leader nodes (marked in green) have directed edges from the external signals (marked as red nodes).  The remaining edges are bidirectional.

\begin{figure}[!h]
  \centering
\includegraphics{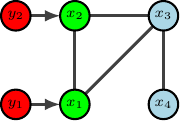}
  \caption{Example of augmented graph $\mathcal{\bar{G}}$. Leaders (green) receive inputs from external sources (red).}\label{fig.auggraph}
\end{figure}

We now observe that the matrix $\bar L_{11}$ in \eqref{semi-aut_aug} is the \emph{grounded Laplacian} of the undirected version of $\mathcal{\bar G}$ \cite{OnTheSmallestEigenvectorGroundedLaplac}, where the external control nodes (the $y_i$'s) are removed.  The \emph{Fiedler eigenvalue}, $\lambda_F$ corresponds to the smallest eigenvalue of $\bar L_{11}$; its corresponding eivenvector $v_F$ can be chosen to have all positive entries. It was shown in \cite{OnTheSmallestEigenvectorGroundedLaplac} that under Assumption \ref{assump.leaders} that $0<\lambda_F<1$. For the remainder of this work, we refer to \emph{the Fiedler eigenvector of $\mathcal G$} to mean the Fiedler vector of the grounded Laplacian $\bar L_{11}$.

In this work we are interested in two problems:
\begin{itemize}
    \item[i)] Characterize a class of graphs such that it is possible to identify the leader nodes from the trajectories of \eqref{semi_aut_orig};
    \item[ii)] Using the trajectory data of \eqref{semi_aut_orig}, construct an algorithm to identify the leader nodes.
\end{itemize}
In the sequel we present our solution to the above problem.



\section{Leader-Identifiable Graphs}\label{sec.leader_graphs}

It is well-known in the literature that the Fiedler eigenvector can be used to identify clusters in a graph \cite{Ng_NIPS21, Luxburg2007}. In this section we derive conditions that ensure the elements of the Fiedler vector can be separated into two groups, one corresponding to the follower nodes and one to the leader nodes.

We now explore algebraic and combinatorial connections between the Fiedler vector associated with $\bar L_{11}$, and the structure of the graph $\mathcal G$.  
In this direction, we define the \emph{semi-normalized adjacency matrix} of $\mathcal G$ as
\begin{align}\label{adj_norm}
\hat A(\mathcal G) = \hat D_{\lambda_F}^{-1} A(\mathcal G) \in \mathbb{R}^{n \times n},
\end{align}
where $A(\mathcal G)$ is the adjacency matrix of $\mathcal G$ and $\hat{D}_{{\lambda_F}}\in \mathbb{R}^{n \times n}$ is given by
 \begin{equation}\label{seminorm_degree}
          [\hat{D}_{\lambda_F}]_{ii} = \begin{cases} \deg(i) + 1 - \lambda_F & i\in\mathcal{V}_\ell\\
          \deg(i) - \lambda_F & i\in\mathcal{V}_f
          \end{cases}.
 \end{equation}
 Note that under Assumption \ref{assump.leaders}, $\lambda_F<1$ so $\hat{D}_{\lambda_F}$ is invertible.  We observe that $\hat A(\mathcal G)$ is actually the adjacency matrix of a weighted and directed graph, which we denote as $\hat G$. It is readily verified that $\hat G$ must be strongly connected if and only if $\mathcal G$ is connected.

 The matrix $\hat{A}(\mathcal G)$ is diagonalizable since it is similar to the symmetric matrix $\hat{D}_{\lambda_F}^{-1/2}A\hat{D}_{\lambda_F}^{-1/2}$, as
\begin{equation}\label{AhatSim}
    \hat{A}(\mathcal G) = \hat{D}_{\lambda_F}^{-1/2}(\hat{D}_{\lambda_F}^{-1/2}A(\mathcal G)\hat{D}_{\lambda_F}^{-1/2}) \hat{D}_{\lambda_F}^{1/2}.
\end{equation}
Consequently, $A(\mathcal G)$ is similar to $\hat{A}(\mathcal G)$.

\begin{proposition}\label{AhatEigenvalue}
The semi-normalized adjacency matrix $\hat{A}(\mathcal G)$ has real, simple, greatest eigenvalue equal to $1$. Furthermore, the corresponding eigenvector is $v_F$,  the  Fiedler vector of $\bar L_{11}$.   
\end{proposition}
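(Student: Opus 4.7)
The plan is to derive the eigenvalue equation directly from the relation between $\bar{L}_{11}$ and the pieces defining $\hat{A}(\mathcal{G})$, then invoke Perron--Frobenius to pin down that the resulting eigenvalue is in fact the largest and simple.

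First I would unpack $\bar{L}_{11}$ using $L(\mathcal{G}) = D(\mathcal{G}) - A(\mathcal{G})$ to write
\begin{equation*}
\bar{L}_{11} = D(\mathcal{G}) - A(\mathcal{G}) + \begin{bmatrix} I_{|\mathcal{V}_\ell|} & 0 \\ 0 & 0\end{bmatrix}.
\end{equation*}
Comparing with the definition of $\hat{D}_{\lambda_F}$ in \eqref{seminorm_degree}, a direct entrywise check gives the key identity
\begin{equation*}
\bar{L}_{11} - \lambda_F I_n = \hat{D}_{\lambda_F} - A(\mathcal{G}).
\end{equation*}
Applying this to $v_F$ and using $\bar{L}_{11} v_F = \lambda_F v_F$ yields $\hat{D}_{\lambda_F} v_F = A(\mathcal{G}) v_F$, and since $\hat{D}_{\lambda_F}$ is invertible (as argued after \eqref{seminorm_degree}), we conclude $\hat{A}(\mathcal{G}) v_F = v_F$. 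This already establishes that $1$ is an eigenvalue of $\hat{A}(\mathcal{G})$ with eigenvector $v_F$.

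The remaining step---showing that this eigenvalue is the \emph{greatest} (in modulus) and \emph{simple}, and that it is real---is where the argument has its only real content, and is where I would apply the Perron--Frobenius theorem. For this I would verify that $\hat{A}(\mathcal{G})$ is a nonnegative irreducible matrix: nonnegativity holds because $A(\mathcal{G})$ has nonnegative entries and, under Assumption~\ref{assump.leaders} combined with $\lambda_F < 1$, every diagonal entry of $\hat{D}_{\lambda_F}$ in \eqref{seminorm_degree} is strictly positive (for followers, $\deg(i) \ge 1$ by connectedness and $\lambda_F < 1$; for leaders, $\deg(i) + 1 - \lambda_F > 0$ trivially). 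Irreducibility is exactly the strong connectedness of $\hat{G}$, which the excerpt notes is equivalent to connectedness of $\mathcal{G}$ and hence holds by hypothesis.

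Having set this up, the Perron--Frobenius theorem asserts that the spectral radius $\rho(\hat{A}(\mathcal{G}))$ is a simple, real, positive eigenvalue, and that (up to positive scaling) it is the \emph{unique} eigenvalue admitting an entrywise-positive eigenvector. Since $v_F$ is entrywise positive (as recalled from \cite{OnTheSmallestEigenvectorGroundedLaplac}), its eigenvalue $1$ must coincide with $\rho(\hat{A}(\mathcal{G}))$, which finishes the claim. The main obstacle---and the only non-mechanical step---is this Perron--Frobenius invocation, since the algebraic identity connecting $\bar{L}_{11}$ and $\hat{A}(\mathcal{G})$ alone would not rule out larger-modulus eigenvalues; the positivity of $v_F$ is the critical ingredient that upgrades ``$1$ is an eigenvalue'' to ``$1$ is the Perron eigenvalue.''
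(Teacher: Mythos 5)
Your proposal is correct and follows essentially the same route as the paper: derive $\hat{A}(\mathcal{G})v_F = v_F$ from the algebraic identity linking $\bar{L}_{11}$, $\hat{D}_{\lambda_F}$, and $A(\mathcal{G})$, then use Perron--Frobenius together with the strict positivity of $v_F$ to conclude that $1$ is the simple, real, largest eigenvalue. Your added details on the positivity of the diagonal entries of $\hat{D}_{\lambda_F}$ and on irreducibility only make explicit what the paper leaves implicit.
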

\begin{proof}
    Define the diagonal matrix
\[
[\hat D_0]_{ii} =
\begin{cases}
\deg(i)+1, & i\in\mathcal V_\ell,\\
\deg(i), & i\in\mathcal V_f.
\end{cases}
\]
From the definition of the grounded Laplacian,
\[
\bar L_{11} = \hat D_0 - A(\mathcal G).
\]
Subtracting $\lambda_F I$ and left-multiplying by $(\hat D_0-\lambda_F I)^{-1}$ gives
\[
(\hat D_0-\lambda_F I)^{-1}A(\mathcal G)
= I-(\hat D_0-\lambda_F I)^{-1}(\bar L_{11}-\lambda_F I).
\]
Since $\hat D_{\lambda_F}=\hat D_0-\lambda_F I$, we have
\[
\hat A(\mathcal G) = I - \hat D_{\lambda_F}^{-1}(\bar L_{11}-\lambda_F I).
\]
Multiplying by $v_F$ and using $\bar L_{11}v_F=\lambda_F v_F$ yields $\hat A(\mathcal G) v_F=v_F$,
so $\lambda=1$ is an eigenvalue of $\hat A(\mathcal G)$ with eigenvector~$v_F$.

Because $\hat A$ is a nonnegative matrix whose associated graph is strongly
connected, the
Perron–Frobenius theorem
\cite{horn13}
implies that its spectral radius is real, positive, and simple,
and that the corresponding eigenvector can be chosen strictly positive.
Since $v_F$ has positive entries,
the pair $(\lambda,v_F)=(1,v_F)$
is the unique Perron–Frobenius eigenpair of~$\hat A(\mathcal G)$.
\end{proof}

To study how network structure affects the spectral properties of the grounded Laplacian, we introduce the notion of a \emph{graph sequence}. 
The key idea is to construct a family of graphs in which the set of leader nodes remains fixed, while the interconnections among the follower nodes become progressively denser. 
This construction provides a systematic way to examine how the components of the Fiedler vector evolve as network connectivity increases. 
In particular, we will show that along such a sequence, the Fiedler vector converges to a simple, degree-dependent expression. 

\begin{definition}[Graph Sequence]\label{def:graph_sequence}
Let $\sigma : \mathbb{N} \to \mathbb{N}$ be an indexing map, and let 
$\mathcal{G}^{\sigma(i)} = (\mathcal{V}^{\sigma(i)}, \mathcal{E}^{\sigma(i)})$ 
denote a sequence of undirected graphs, where the node set is partitioned into disjoint leader and follower subsets,
\[
\mathcal{V}^{\sigma(i)} = 
\mathcal{V}_\ell^{\sigma(i)} \cup \mathcal{V}_f^{\sigma(i)}, 
\text{ and }
\mathcal{V}_\ell^{\sigma(i)} \cap \mathcal{V}_f^{\sigma(i)} = \emptyset.
\]
For each follower node $j \in \mathcal{V}_f^{\sigma(i)}$, define its number of follower and leader neighbors as
\begin{align*}
\deg_f^{\sigma(i)}(j) &= 
|\{ k \in \mathcal{V}_f^{\sigma(i)} \mid (k,j)\in\mathcal{E}^{\sigma(i)}\}|,\\
\deg_\ell^{\sigma(i)}(j) &= 
|\{ k \in \mathcal{V}_\ell^{\sigma(i)} \mid (k,j)\in\mathcal{E}^{\sigma(i)}\}|.
\end{align*}
Furthermore, let $\underline{\deg}_f^{\sigma(i)} = 
\min_{j \in \mathcal{V}_f^{\sigma(i)}} \deg_f^{\sigma(i)}(j)$ denote the minimum follower-follower degree 
The sequence $\{\mathcal{G}^{\sigma(i)}\}$ satisfies the following properties:
\setlength{\itemsep}{0pt}
\setlength{\parskip}{0pt}
\setlength{\parsep}{0pt}
\begin{itemize}
        \item[$i)$] $\mathcal{G}^{\sigma(i)}$ is connected for all $i$;
         \item[$ii)$] The leader set is fixed, 
    $\mathcal{V}_\ell^{\sigma(i)} = \mathcal{V}_\ell^{\sigma(i+1)} = \mathcal{V}_\ell$, 
    with $|\mathcal{V}_\ell| \ge 1$;
        \item[$iii)$]Each leader has constant degree, $\mathbf d_j := \deg_\ell^{\sigma(i)}(j)=\deg_\ell^{\sigma(i+1)}(j)$ for all $i$ and for all $j\in \mathcal{V}_\ell$;
        \item[$iv)$]  Leaders are not directly connected to one another, 
    i.e., $k \notin \mathcal{N}^{\sigma(i)}(j)$ for all $k,j \in \mathcal{V}_\ell$;
        \item[$v)$]The minimum follower degree increases monotonically, 
    $\underline{\deg}_f^{\sigma(i)} < \underline{\deg}_f^{\sigma(i+1)}$ for all $i$.
    \end{itemize}

\end{definition}

\begin{example}\label{GraphSequqnceExample}
Figure~\ref{fig:graph_sequence_example} illustrates a sequence of graphs satisfying
Definition~\ref{def:graph_sequence}.  
The leader set (green nodes) remains fixed throughout, and leaders are never directly connected.  
In contrast, the follower subgraph (blue nodes) becomes progressively denser as new edges and nodes are added.

\end{example}

\begin{figure}[!h]
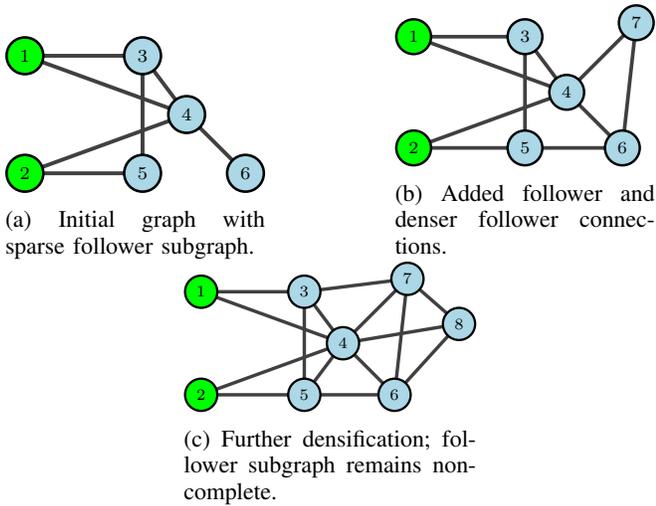

  \centering
  \begin{subfigure}{0.4\linewidth}
    \centering
    \vspace{10pt}\includestandalone[width=\linewidth]{graphics/GraphSeq_1}
    \caption{Initial graph with sparse follower subgraph.}
  \end{subfigure}\hfill
  \begin{subfigure}{0.4\linewidth}
    \centering
    \vspace{10pt}\includestandalone[width=\linewidth]{graphics/GraphSeq_2}
    \caption{Added follower and denser follower connections.}
  \end{subfigure}\hfill
  \begin{subfigure}{0.45\linewidth}
    \centering
    \includestandalone[width=\linewidth]{graphics/GraphSeq_3}
    \caption{Further densification; follower subgraph remains non-complete.}
  \end{subfigure}
  \caption{Example of a graph sequence where leaders (green) remain fixed and followers (blue) become progressively more connected. 
  The leader degrees remain constant across all graphs, and leaders are never directly connected.}
  \label{fig:graph_sequence_example}
\end{figure}

We now present a result that shows, in the limit of the graph sequence defined in Definition \ref{def:graph_sequence}, the structure of the Fiedler vector.
\begin{lemma}\label{FiedlerDegree}
Let $\{\mathcal{G}^{\sigma(i)}\}$ be a graph sequence satisfying Definition~\ref{def:graph_sequence}. Then,
    $ \displaystyle\lim_{i\to\infty} [\hat A^{\sigma(i)}(\mathcal G)\bar v_F^{\sigma(i)}]_j = \lim_{i\to\infty} \bar v_F^{\sigma(i)}$, where
    \begin{align}\label{fiedler_vec_limit}
    [ \bar v_F^{\sigma(i)}]_j =
    \begin{cases}
        1, & j\in \mathcal{V}_f^{\sigma(i)},\\[2mm]
        \dfrac{\mathbf d_j}{\mathbf d_j+1- \lambda_F^{\sigma(i)}}, & j\in\mathcal{V}_\ell,
    \end{cases}
    \end{align}
    where $\lambda_F^{\sigma(i)}$ is the Fiedler value of the grounded Laplacian associated with the graph $\mathcal G^{\sigma(i)}$ and leader set $\mathcal V_\ell$.
\end{lemma}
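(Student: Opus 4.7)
The plan is to verify directly that the candidate vector $\bar v_F^{\sigma(i)}$ in \eqref{fiedler_vec_limit} satisfies the Perron eigenvalue relation $\hat A^{\sigma(i)}(\mathcal G)\, v = v$ asymptotically, component by component. By Proposition~\ref{AhatEigenvalue}, this fixed-point relation characterizes the Fiedler vector of $\bar L_{11}$ uniquely up to positive scaling, so exhibiting $\bar v_F^{\sigma(i)}$ as an asymptotic eigenvector with eigenvalue $1$ suffices.

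I would split the verification by node type. For a leader $j \in \mathcal V_\ell$, property~$(iv)$ of Definition~\ref{def:graph_sequence} forces every neighbor of $j$ to be a follower, each of which is assigned candidate value $1$ in $\bar v_F^{\sigma(i)}$. The numerator of $[\hat A^{\sigma(i)}(\mathcal G)\bar v_F^{\sigma(i)}]_j$ is therefore exactly $\mathbf d_j$, and dividing by $[\hat D_{\lambda_F}]_{jj} = \mathbf d_j + 1 - \lambda_F^{\sigma(i)}$ recovers $[\bar v_F^{\sigma(i)}]_j$ exactly; so the leader components satisfy the eigenvalue relation at every step, not only in the limit.

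For a follower $j \in \mathcal V_f^{\sigma(i)}$, I would partition its neighbors into follower neighbors, contributing exactly $\deg_f^{\sigma(i)}(j)$ to the sum since each entry equals $1$, and leader neighbors, contributing at most $|\mathcal V_\ell|$ terms lying in $(0,1)$. Writing $[\hat A^{\sigma(i)}(\mathcal G)\bar v_F^{\sigma(i)}]_j$ as a ratio, both numerator and denominator equal $\deg_f^{\sigma(i)}(j)$ plus correction terms whose magnitudes are bounded by $|\mathcal V_\ell| + 1$. Property~$(v)$ then does the work: $\underline{\deg}_f^{\sigma(i)}$ is a strictly increasing integer sequence, hence diverges, and uniformly bounds $\deg_f^{\sigma(i)}(j)$ from below over all followers present at stage~$i$. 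The ratio therefore tends to $1 = [\bar v_F^{\sigma(i)}]_j$. The main obstacle is essentially bookkeeping rather than analysis: one must keep $|\mathcal V_\ell|$, the fixed leader degrees $\mathbf d_j$, and $\lambda_F^{\sigma(i)} \in (0,1)$ (guaranteed by Assumption~\ref{assump.leaders}) as uniformly bounded constants so that their effect vanishes against the diverging follower-follower degree. No quantitative convergence rate is required—only the monotone divergence granted by property~$(v)$.
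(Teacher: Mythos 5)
Your proposal is correct and follows essentially the same route as the paper's proof: a component-wise verification of the fixed-point relation $\hat A\,\bar v_F = \bar v_F$, with exact equality at leader nodes (since property $(iv)$ forces all their neighbors to be followers with candidate value $1$) and asymptotic equality at follower nodes, where the bounded leader contributions and $\lambda_F^{\sigma(i)}\in(0,1)$ vanish against the diverging minimum follower--follower degree from property $(v)$. The paper merely packages the same bookkeeping into a single closed-form expression before specializing to the two node types, so there is no substantive difference.
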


\begin{proof}
    For notational brevity, set $\lambda_i := \lambda_F^{\sigma(i)}$, 
$\hat A_i := \hat A^{\sigma(i)}(\mathcal G)$, 
$\bar v_i := \bar v_F^{\sigma(i)}$, and 
$\deg_i(j) := \deg^{\sigma(i)}(j)$. For any node $j \in \mathcal{V}$ we have
\begin{align}\label{eq:hatAvi_general}
    [\hat A_i \bar v_i]_j
    = 
    \frac{1}{\deg_i(j) + \delta_j - \lambda_i}
    \sum_{k\in \mathcal N_i(j)} [\bar v_i]_k,
\end{align}
where $\delta_j = 1$ if $j\in\mathcal V_\ell$ and $\delta_j=0$ otherwise. Using~\eqref{fiedler_vec_limit} and splitting by follower/leader neighbors,
\begin{align*}
    \sum_{k\in \mathcal N^{\sigma(i)}(j)} [\bar v_i]_k
    &= \hspace{-10pt}
    \sum_{k\in \mathcal N^{\sigma(i)}(j)\cap\mathcal V_f} 1
    +
    \sum_{k\in \mathcal N^{\sigma(i)}(j)\cap\mathcal V_\ell}
    \frac{\mathbf d_k}{\mathbf d_k+1-\lambda_i}\\[1ex]
    &\hspace{-.5cm}=
    \deg_i(j)
    - (1-\lambda_i)
      \sum_{k\in \mathcal N^{\sigma(i)}(j)\cap\mathcal V_\ell}
      \frac{1}{\mathbf d_k+1-\lambda_i},
\end{align*}
where we used the identity $\frac{\mathbf d_k}{\mathbf d_k+1-\lambda_i}
= 1 - \frac{1-\lambda_i}{\mathbf d_k+1-\lambda_i}$.

Substituting into~\eqref{eq:hatAvi_general} and separating the terms yields compact form
\begin{align}\label{eq:hatAvi_closed}
    [\hat A_i \bar v_i]_j
    &=
    1
    -
    \frac{\delta_j-\lambda_i}{\deg_i(j)+\delta_j-\lambda_i}
    - \nonumber\\
    &
    \frac{(1-\lambda_i)}
         {\deg_i(j)+\delta_j-\lambda_i}
      \sum_{k\in \mathcal N^{\sigma(i)}(j)\cap\mathcal V_\ell}
      \frac{1}{\mathbf d_k+1-\lambda_i}.
\end{align}

We now consider \eqref{eq:hatAvi_closed} for $j\in\mathcal V_f$. Here $\delta_j=0$ and $[\bar v_i]_j = 1$.  From~\eqref{eq:hatAvi_closed},
\begin{align*}
[\hat A_i \bar v_i]_j
&=1+ \frac{\lambda_i}{\deg_i(j)-\lambda_i} \\
&- \frac{(1-\lambda_i)}
         {\deg_i(j)-\lambda_i}
      \sum_{k\in \mathcal N^{\sigma(i)}(j)\cap\mathcal V_\ell}
      \frac{1}{\mathbf d_k+1-\lambda_i}.
\end{align*}
By Definition~\ref{def:graph_sequence}($v$),
$\underline{\deg}_f^{\sigma(i)}\to\infty$, 
while $\lambda_i\in(0,1)$, and the leader degrees $\mathbf d_k$ are constant by item~($iii$). 
Hence both correction terms vanish, and therefore
\[
\lim_{i\to\infty} [\hat A_i \bar v_i]_j = 1
= \lim_{i\to\infty} [\bar v_i]_j.
\]
For leader nodes $j \in \mathcal V_\ell$, by Definition~\ref{def:graph_sequence}($iv$), leaders are not directly connected,
so $\mathcal N^{\sigma(i)}(j)\cap\mathcal V_\ell=\emptyset$.
Equation~\eqref{eq:hatAvi_closed} then reduces to
\begin{align*}
[\hat A_i \bar v_i]_j
&=
1 - \frac{1-\lambda_i}{\deg_i(j)+1-\lambda_i}
=
\frac{\deg_i(j)}{\deg_i(j)+1-\lambda_i} \\
&=
\frac{\mathbf d_j}{\mathbf d_j+1-\lambda_i}
= [\bar v_i]_j.
\end{align*}
Hence equality holds for all~$i$. To conclude, we have in both cases \(
\displaystyle\lim_{i\to\infty} [\hat A_i \bar v_i]_j
= \displaystyle\lim_{i\to\infty} [\bar v_i]_j,
\).
\end{proof}

An important consequence of Lemma \ref{FiedlerDegree}, is that in the limit (i.e., for an infinite graph satisfying the properties in Definition \ref{def:graph_sequence}, \eqref{fiedler_vec_limit} is the Fiedler eigenvector.

The following theorem leverages our previous results to provide a sufficient condition on the graph structure that ensures a separation between the Fiedler vector components corresponding to leaders and followers. This separation will later enable us to identify the leaders.

\begin{theorem}\label{thm:main}
Let $\mathcal{G} = (\mathcal{V},\mathcal{E})$ be an undirected graph whose node set 
is partitioned into followers $\mathcal{V}_f$ and leaders $\mathcal{V}_\ell$, 
and let $A(\mathcal{G})$ and $\lambda_F$ denote, respectively, 
the adjacency matrix and the Fiedler eigenvalue of the (grounded) Laplacian associated with $\mathcal{G}$.
For each follower node $j \in \mathcal{V}_f$, define
\[
\deg_f(j) = 
\bigl|\{\,k \in \mathcal{V}_f \mid (k,j)\in\mathcal{E}\,\}\bigr|,
\]
as its number of follower neighbors, and let
\[
\underline{\deg}_f = \min_{j\in\mathcal{V}_f} \deg_f(j),
\]
denote the minimum follower–follower degree. 
If the following conditions hold:
\begin{itemize}
    \item[$i$)]  $\mathcal{G}$ is connected;
    \item[$ii$)]  Leaders are not directly connected, 
    i.e., $k \notin \mathcal{N}(j)$ for all $k,j\in\mathcal{V}_\ell$;
    \item[$iii$)] with $\phi(j)\!:=\!\frac{\deg(j)}{(\deg(j)\!+\!1\!-\!\lambda_F)}$, one has 
    \[1-\max_{j\in\mathcal{V}_\ell}\phi(j)
      > \max_{j\in\mathcal{V}_\ell}\min_{k\in\mathcal{V}_\ell}|\phi(j)-\phi(k)|;\]
    \item[$iv$)]  $\underline{\deg}_f$ is sufficiently large,
\end{itemize}
then the Fiedler vector $v_F$ of $\mathcal{G}$ satisfies
\[
\min_{i\in\mathcal{V}_f} [v_F]_i
-
\max_{i\in\mathcal{V}_\ell} [v_F]_i
\;>\;
\max_{i\in\mathcal{V}_\ell}\min_{j\in\mathcal{V}_\ell}
    \bigl|[v_F]_i-[v_F]_j\bigr|.
\]
\end{theorem}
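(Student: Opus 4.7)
The plan is to prove the theorem by combining the explicit limiting form of the Fiedler vector from Lemma~\ref{FiedlerDegree} with a perturbation argument driven by condition ($iv$).

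First, I would define the candidate vector $\bar v$ entrywise by $[\bar v]_j = 1$ for $j \in \mathcal V_f$ and $[\bar v]_j = \phi(j)$ for $j \in \mathcal V_\ell$, mirroring \eqref{fiedler_vec_limit}. For this fixed vector the quantities in the theorem's conclusion simplify immediately: $\min_{i \in \mathcal V_f}[\bar v]_i = 1$, $\max_{i \in \mathcal V_\ell}[\bar v]_i = \max_{j \in \mathcal V_\ell} \phi(j)$, and $\max_i \min_j |[\bar v]_i - [\bar v]_j| = \max_i \min_j |\phi(i) - \phi(j)|$. Condition ($iii$) is therefore precisely the statement that the theorem's inequality holds strictly when $v_F$ is replaced by $\bar v$.

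Next, I would quantify how well $\bar v$ approximates the true Fiedler vector $v_F$. Applying the algebraic manipulation in the proof of Lemma~\ref{FiedlerDegree} to our fixed graph (rather than to a sequence), condition ($ii$) gives $[\hat A \bar v]_j = [\bar v]_j$ exactly for leader entries, while for follower entries the explicit residual \eqref{eq:hatAvi_closed} is controlled by a constant times $1/(\underline{\deg}_f - \lambda_F)$, since $\lambda_F \in (0,1)$ and the leader degrees $\mathbf d_k$ appearing in the sum are fixed and bounded in number. Thus $\|\hat A \bar v - \bar v\|_\infty = O\!\bigl(1/\underline{\deg}_f\bigr)$, so $\bar v$ is an approximate eigenvector of $\hat A$ for the eigenvalue $1$, with residual shrinking as $\underline{\deg}_f$ grows.

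Third, I would invoke Proposition~\ref{AhatEigenvalue}: the Perron eigenvalue $1$ of $\hat A$ is simple and is the spectral radius, hence strictly separated from the rest of the spectrum. Passing through the symmetric similarity \eqref{AhatSim}, a standard reverse-direction eigenvector bound (Bauer--Fike, or a Davis--Kahan-type estimate applied to the conjugated matrix $\hat D_{\lambda_F}^{-1/2} A \hat D_{\lambda_F}^{-1/2}$) yields $\|v_F - \bar v\|_\infty \to 0$ as $\underline{\deg}_f \to \infty$, after aligning the two vectors under a common normalization. Continuity of the $\min$, $\max$, and absolute-value operations then transfers the strict inequality from $\bar v$ to $v_F$ whenever $\underline{\deg}_f$ is large enough, which is the content of condition ($iv$).

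The main obstacle is this last step: the matrix $\hat A$ itself depends on $\underline{\deg}_f$ through both the degree normalization and the Fiedler value $\lambda_F$. One must verify that the spectral gap of $\hat A$ does not collapse as $\underline{\deg}_f$ grows, so that the constant in the perturbation estimate remains uniformly bounded, and that the chosen normalization makes a componentwise comparison with $\bar v$ meaningful. Working throughout with the symmetric conjugate is the cleanest way to keep the eigenvector perturbation under control.
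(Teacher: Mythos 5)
Your proposal follows essentially the same route as the paper's proof: compare the true Fiedler vector to the candidate vector with follower entries $1$ and leader entries $\phi(j)$ (the limiting form from Lemma~\ref{FiedlerDegree}), treat condition ($iii$) as the strict margin $\epsilon_d$ for that candidate, and use condition ($iv$) to make $v_F$ close enough to the candidate that the strict inequality transfers. The one place you go beyond the paper is your third step: the paper simply asserts $\|\bar v_F^{\sigma(i)} - v_\star^{\sigma(i)}\| \to 0$ ``as a result of Lemma~\ref{FiedlerDegree}'', whereas that lemma only controls the residual $\hat A \bar v - \bar v$, so your explicit conversion of the $O(1/\underline{\deg}_f)$ residual into an eigenvector error via the simple Perron eigenvalue of $\hat A$ (Proposition~\ref{AhatEigenvalue}) and a spectral-gap perturbation bound on the symmetric conjugate is precisely the step the paper leaves implicit --- and the uniform-spectral-gap caveat you flag is the genuine remaining issue in both arguments.
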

\begin{proof}
Define the positive margin
\begin{align}\label{epsilon_d}
\epsilon_d 
&= 1 
- \max_{j\in \mathcal{V}_\ell} 
  \phi(j)
- \max_{j\in \mathcal{V}_\ell} 
   \min_{k\in \mathcal{V}_\ell} 
   \Biggl|
  \phi(j)
   - \phi(k)
   \Biggr| > 0.
\end{align}

Consider a graph sequence 
$\{\mathcal{G}^{\sigma(i)}\}$ satisfying Definition~\ref{def:graph_sequence},
that is,
each $\mathcal{G}^{\sigma(i)}=(\mathcal{V}^{\sigma(i)},\mathcal{E}^{\sigma(i)})$
is connected, the leader set $\mathcal{V}_\ell$ and the leader degrees 
$\mathbf d_j$ are fixed,
leaders are not adjacent, and the minimum follower--follower degree
$\underline{\deg}_f^{\sigma(i)}$ increases monotonically with~$i$.
Let $v_\star^{\sigma(i)}$ denote the Fiedler vector associated with the graph $\mathcal G^{\sigma(i)}$.  Then, as a result of Lemma \ref{FiedlerDegree}, we have
$$\lim_{i\to \infty} \|\bar v_F^{\sigma(i)}-v_\star^{\sigma(i)}\| = 0,$$
where $\bar v_F^{\sigma(i)}$ is defined in \eqref{fiedler_vec_limit}.

Now, let $S^{\sigma(i)}=\|\bar v_F^{\sigma(i)}-v_\star^{\sigma(i)}\|$. Since $\displaystyle \lim_{i \to \infty} S^{\sigma(i)}=0$, for every $\epsilon>0$ there exists an index $i^*$ such that
$S^{\sigma(i)} < \epsilon$ for all $i>i^*$.
Choose $i^*$ so that $\underline{\deg}_f^{\sigma(i^*)}=m$
and fix $\epsilon < \epsilon_d/4$.
Then, for all $i>i^*$,
the vectors $\bar v_F^{\sigma(i)}$ and $v_*^{\sigma(i)}$
are $\epsilon$–close component-wise.

We now compare the follower and leader components of $v_\star^{\sigma(i)}$.  
Because $S^{\sigma(i)} < \epsilon$, each component of $v_\star^{\sigma(i)}$ differs from the corresponding entry of $\bar v_F^{\sigma(i)}$ by at most~$\epsilon$.  
Hence,
{\small{\begin{align*}
\min_{j\in\mathcal{V}_f}
   [v_\star^{\sigma(i)}]_j
 - \max_{j\in\mathcal{V}_\ell}
   [v_\star^{\sigma(i)}]_j
&>
\min_{j\in\mathcal{V}_f}
   [\bar v_F^{\sigma(i)}]_j
 - \max_{j\in\mathcal{V}_\ell}
   [\bar v_F^{\sigma(i)}]_j
 - 2\epsilon.
\end{align*}}}

Substituting the explicit definition of $\bar v_F^{\sigma(i)}$ from~\eqref{fiedler_vec_limit},
we have
{\small{\[
\min_{j\in\mathcal{V}_f} [\bar v_F^{\sigma(i)}]_j = 1, \text{ and }
\max_{j\in\mathcal{V}_\ell} [\bar v_F^{\sigma(i)}]_j
= \max_{j\in\mathcal{V}_\ell}
   \frac{\mathbf d_j}{\mathbf d_j+1-\lambda_F^{\sigma(i)}}.
\]}}
Therefore,
{\small{\begin{align}\label{eps_bound1}
\min_{j\in\mathcal{V}_f}
   [v_\star^{\sigma(i)}]_j
 - \max_{j\in\mathcal{V}_\ell}
   [v_\star^{\sigma(i)}]_j
>
1 -
\max_{j\in\mathcal{V}_\ell}
   \frac{\mathbf d_j}{\mathbf d_j + 1 - \lambda_F^{\sigma(i)}}
 - 2\epsilon.
\end{align}}}


Recalling the definition of $\epsilon_d$ in \eqref{epsilon_d}, 
we can rearrange it as
\[
1 - 
\max_{j\in\mathcal{V}_\ell} \phi(j)
=
\max_{j\in\mathcal{V}_\ell}
  \min_{k\in\mathcal{V}_\ell}
  |\phi(j)-\phi(k)|
+ \epsilon_d.
\]
Substituting this identity into the  inequality \eqref{eps_bound1} gives
{\small{\begin{align*}
\min_{j\in\mathcal{V}_f}[v_\star^{\sigma(i)}]_j
-\max_{j\in\mathcal{V}_\ell}[v_\star^{\sigma(i)}]_j
>
\max_{j\in\mathcal{V}_\ell}\!
  \min_{k\in\mathcal{V}_\ell}\!
  |\phi(j)-\phi(k)|
+\epsilon_d-2\epsilon.
\end{align*}}}

Applying again the bound $S^{\sigma(i)}<\epsilon$ to replace the terms involving $\bar v_F^{\sigma(i)}$ by those of $v_\star^{\sigma(i)}$, we obtain
\begin{align*}
&\min_{j\in\mathcal{V}_f}
   [v_\star^{\sigma(i)}]_j
 - \max_{j\in\mathcal{V}_\ell}
   [v_\star^{\sigma(i)}]_j \\[1ex]
&\quad >
   \max_{j\in\mathcal{V}_\ell}
     \min_{k\in\mathcal{V}_\ell}
     \bigl|
     [v_\star^{\sigma(i)}]_j - [v_\star^{\sigma(i)}]_k
     \bigr|
   + \epsilon_d - 4\epsilon.
\end{align*}
Finally, since $\epsilon<\epsilon_d/4$, the right-hand side is strictly larger than 
$\max_{j\in\mathcal{V}_\ell}\min_{k\in\mathcal{V}_\ell}|[v_\star^{\sigma(i)}]_j - [v_\star^{\sigma(i)}]_k|$,  
and therefore
\[
\min_{j\in\mathcal{V}_f}[v_\star^{\sigma(i)}]_j
-
\max_{j\in\mathcal{V}_\ell}[v_\star^{\sigma(i)}]_j
>
\max_{j\in\mathcal{V}_\ell}\min_{k\in\mathcal{V}_\ell}
|[v_\star^{\sigma(i)}]_j-[v_\star^{\sigma(i)}]_k|.
\]

\smallskip
Thus, any graph $\mathcal{G}$ with $\underline{\deg}_f^{\mathcal{G}}>m$
satisfies the claimed inequality for its Fiedler vector $v_F$,
completing the proof.
\end{proof}

\section{A Leader Identification Algorithm}\label{sec:algorithm}

Section \ref{sec.leader_graphs} provides a characterization of graphs that have good separation between the entries of the Fiedler vector corresponding to the leaders and the entries corresponding to the followers.  Assuming that we are considering graphs with this property is not enough on its own to actually solve the problem outlined at the end of Section \ref{sec.statement}.  Indeed, what is missing is a way to estimate the Fiedler vector of the network using only data from the its trajectories generated by \eqref{semi_aut_orig}.

We next review the notion of \emph{relative tempo} introduced in~\cite{Shao2019relative}, which extends the earlier concept of the \emph{degree of relative influence} proposed in~\cite{Mesbahi2014}. 
These metrics quantify how the evolution rate of one agent (or subgroup) compares to another within a consensus-type network. 
Formally, the relative tempo between two agents is defined as
\begin{equation}
\tau_{ij} = \frac{\dot{x}_i}{\dot{x}_j},
\label{eq:tau_def}
\end{equation}
capturing the instantaneous ratio between their rates of change. 

For large times $t > T$, the agents’ dynamics are dominated by the slowest decaying mode of the network Laplacian.  
Projecting the system onto its eigenbasis and neglecting the zero and fast-decaying modes yields~\cite{Mesbahi2014}
\begin{equation}
\tau_{ij} \approx \frac{[v_F]_i}{[v_F]_j},
\label{eq:tau_approx}
\end{equation}
where $v_F$ is the Fiedler vector of $\mathcal G$. 
Consequently, the steady-state relative tempos encode the spatial structure of this dominant eigenvector. 
By selecting a reference agent $j^\ast$, one can estimate $v_F$ up to a scaling factor directly from relative tempo measurements:
\begin{equation}
v_F \in \mathrm{span}\{\bar{\tau}\}, \qquad [\bar{\tau}]_i = \tau_{i j^\ast}.
\label{eq:vf_estimation}
\end{equation}
This relationship provides a data-driven approach for recovering the Fiedler vector---and thus the influence structure of the network---from velocity measurements alone.

 We now propose an algorithm to identify leaders in semi-autonomous consensus networks whose graphs satisfy Definition \ref{def:graph_sequence} and Theorem \ref{thm:main}.
 %
 
\begin{algorithm}[H]
\caption{External Observer–Based Leader Identification}
\label{alg:IdentifyAlgo}
\begin{algorithmic}[1]
\State \textbf{Input:} Time series of agent states $x_i(t)$ under constant external input from \eqref{semi_aut_orig}.
\State \textbf{Output:} Estimated leader set $\mathcal{V}_\ell$.

\State Measure the agents' steady-state velocities $\dot{x}_i$.
\State Compute the relative tempo 
\[
\tau_{ij} = \frac{\dot{x}_i}{\dot{x}_j},
\]
and estimate the Fiedler vector $v_F$ using~\eqref{eq:vf_estimation}.
\State Sort the entries of $v_F$ in ascending order:
\[
v_{F_s} = \mathrm{sort}(v_F), \qquad [v_{F_s}]_i \le [v_{F_s}]_{i+1}.
\]
\State Determine the number of leaders:
\[
n_\ell = \arg\max_{j \in \{1,\dots,n-1\}} 
\big( [v_{F_s}]_{j+1} - [v_{F_s}]_j \big).
\]
\State Identify the leader nodes as those corresponding to the smallest $n_\ell$ components of $v_{F_s}$:
\[
\mathcal{V}_\ell = \{\, i \mid [v_F]_i \text{ among the smallest } n_\ell \,\}.
\]
\end{algorithmic}
\end{algorithm}

Algorithm \ref{alg:IdentifyAlgo} provides a systematic method to identify leaders based on the Fiedler vector derived from agents' velocity measurements. Line 3 ensures the system reaches steady state so that transient dynamics do not affect the estimation. Line 4 transforms the velocity data into the relative tempo, which gives us an estimation of the Fiedler vector. Sorting the Fiedler vector in line 5 reveals the natural separation between leaders and followers, and line 6 identifies the number of leaders by detecting the largest gap in the sorted vector. Finally, line 7 maps these components back to the actual leader agents.

\section{NUMERICAL EXAMPLE}\label{sec:NumericalExamples}

In this section, we provide examples of networks that support our findings from Theorem \ref{thm:main} and Algorithm \ref{alg:IdentifyAlgo}.
The following examples demonstrate a 2D scenario, where the leaders are the green nodes. Here, the dynamics of \eqref{semi_aut_orig} are simply augmented using a Kronecker product to represent agents in $\mathbb{R}^2$. We consider a system with $n=10$ agents, where $\{ 2,4,8 \}\in \mathcal{V}_\ell$. The external input provided to the leaders are given as
$$u_2^{ex} = [40,35]^T,\hspace{2mm} u_4^{ex} = [48,44]^T,\hspace{2mm}  u_8^{ex} =[16,45]^T.$$

The underlying network graph is illustrated in Figure \ref{fig:graph_small1}. We simulate the protocol, and the resulting trajectories are shown in Figure \ref{fig:traj_small1}. The relative tempo is depicted in Figure \ref{fig:tempo_small1}. The plot shows with black-dashed lines the true values of the Fiedler vector.  As expected, the leader and follower components are clearly separated.

\begin{figure}[!h]
  \centering
\includestandalone[width=0.45\linewidth]{example3_graph}
  \caption{Sensing graph of Example~1.}
  \label{fig:graph_small1}
\vspace{10mm}
  \includegraphics[width=0.9\linewidth]{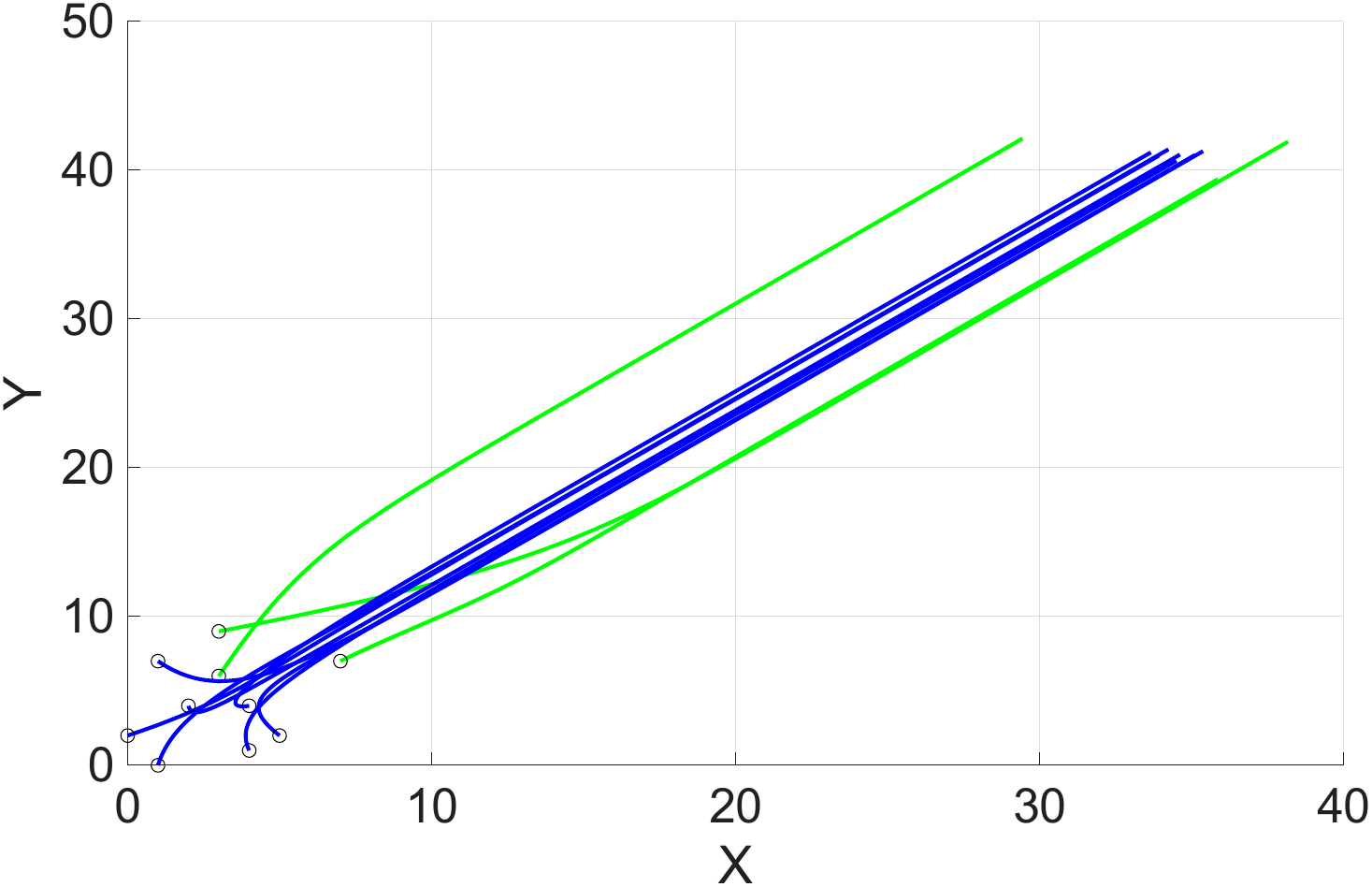}
  \caption{Trajectories corresponding to the underlying graph in Example~1. 
           Circles mark the initial states. The blue curve corresponds to the followers, while the green curve represents the leaders.}
  \label{fig:traj_small1}
\end{figure}

Next, we verify the conditions outlined in Theorem~\ref{thm:main}.
First, let us check condition~$(iii)$:
\begin{align*}
\epsilon_d 
&= 1 
- \max_{j\in \mathcal{V}_\ell} 
  \phi(j)
- \max_{j\in \mathcal{V}_\ell} 
   \min_{k\in \mathcal{V}_\ell} 
   \Biggl|
  \phi(j)
   - \phi(k)
   \Biggr|  \\
&= 0.2 > 0.
\end{align*}
We can see that $\epsilon_d$ has a positive value, which means condition ($iii$) is satisfied.
Next, for condition $(iv)$, we require a sufficiently large $\underline{\deg}_F$ to ensure $\epsilon<\frac{\epsilon_d}{4}$. We calculate:
$$\epsilon = 0.0459 < 0.05 = \frac{\epsilon_d}{4}.$$
Since all conditions are satisfied, there is sufficient separation between the Fiedler vector components corresponding to leaders and followers. Therefore, Algorithm~\ref{alg:IdentifyAlgo} can be used to identify the leaders.

Using the velocity measurements, we can estimate the Fiedler vector as 
\begin{align*}
v_F = [&\,0.3376,~0.2661,~0.3276,~0.2649,~0.3232,\\
       &~0.3277,~0.3501,~0.2638,~0.3420,~0.3417\,]^T.
\end{align*}
{Sorting the vector and then determining the number of leaders using the calculation in Step~6 of the algorithm reveals that there are three leaders. Finally, according to Step~7, the identified leaders are $\{ 2,4,8 \}\in \mathcal{V}_\ell$.
}




\begin{figure}[!t]
  \centering
  \includegraphics[width=0.9\linewidth]{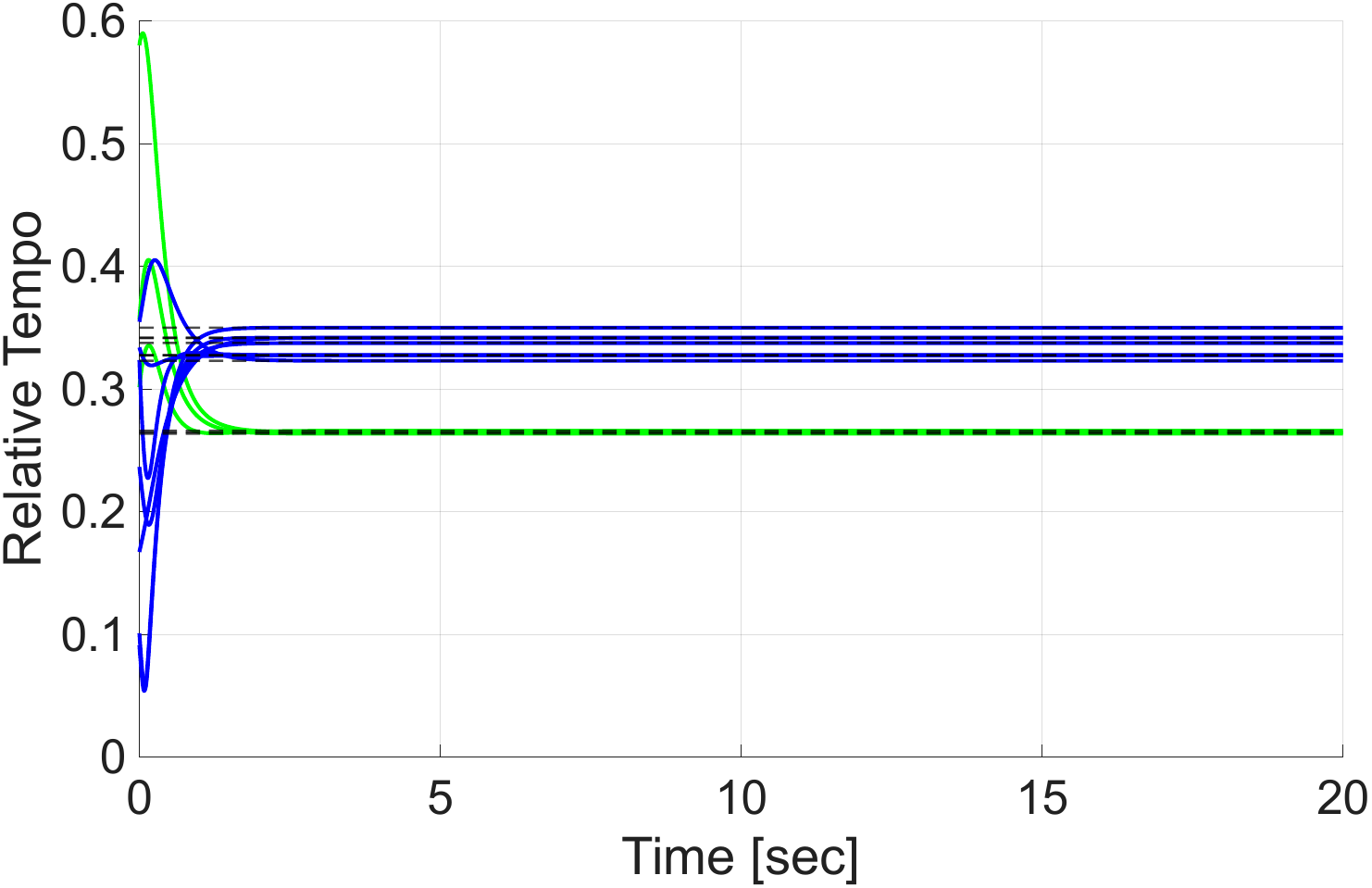}
  \caption{Relative tempo for the network in Example~1. 
           Note the gap between leaders and follower components. The blue curve corresponds to the followers, while the green curve represents the leaders. The black dashed lines is the true value of the Fiedler vector.}
  \label{fig:tempo_small1}
\end{figure}

\section{CONCLUSIONS}\label{sec.conclusion}

Our findings indicate that certain graph structures are more likely to exhibit a clear separation between the components of the Fiedler vector. Specifically, the graphs identified in this research tend to have relatively dense connectivity—characterized by a high mean node degree—while the leader nodes themselves maintain comparatively lower degrees.
This structural property enables effective leader identification through external observation, particularly in scenarios involving constant external input. In such cases, the agents’ steady-state velocities are directly related to the corresponding components of the Fiedler vector. This relationship allows us to reliably distinguish and identify the leader agents within the network.

Future research can extend this work in several directions. One promising direction is to investigate scenarios involving non-constant external input signals, or inputs that remain constant only over specific time intervals—corresponding to signals with discrete frequency updates. Another important avenue is the development of methods for identifying not only the leader nodes but the complete underlying network structure. Finally, exploring a broader range of graph topologies that exhibit component separation in the Fiedler vector could provide deeper insight into how structural properties influence leader–follower dynamics.

\bibliographystyle{IEEEtran}
\bibliography{references}

\end{document}